\documentclass[11pt]{amsart}

\usepackage{amscd,amssymb,amsopn,amsmath,amsthm,mathrsfs,graphics,amsfonts,enumerate,verbatim,calc
}
\usepackage{bbm}
\usepackage[all,cmtip]{xy}
\usepackage[all]{xy}
\usepackage{tikz}
\usepackage{lscape}
\usepackage{enumitem}
\usetikzlibrary{matrix,arrows,decorations.pathmorphing,cd}
\usepackage{hyperref}
\hypersetup{colorlinks=true,linkcolor={blue}}
\usepackage{cleveref}
\usepackage{url}

\usepackage{scalerel}
\usepackage{stackengine,wasysym}
\usetikzlibrary {positioning}
\usetikzlibrary{patterns}
\usetikzlibrary{calc}
\definecolor {processblue}{cmyk}{0.96,0,0,0}

\usepackage{subfigure}

\usepackage{comment} 

\usepackage{ dsfont }

\usepackage{color}


\usepackage[OT2,OT1]{fontenc}
\newcommand\cyr{%
\renewcommand\rmdefault{wncyr}%
\renewcommand\sfdefault{wncyss}%
\renewcommand\encodingdefault{OT2}%
\normalfont
\selectfont}
\DeclareTextFontCommand{\textcyr}{\cyr}

\usepackage{amssymb,amsmath}

\DeclareFontFamily{OT1}{rsfs}{}
\DeclareFontShape{OT1}{rsfs}{n}{it}{<-> rsfs10}{}
\DeclareMathAlphabet{\mathscr}{OT1}{rsfs}{n}{it}

\topmargin=0in
\oddsidemargin=0in
\evensidemargin=0in
\textwidth=6.5in
\textheight=8.5in

\numberwithin{equation}{section}
\hyphenation{semi-stable}

\newtheorem{theorem}{Theorem}[section]
\newtheorem{lem}[theorem]{Lemma}
\newtheorem{prop}[theorem]{Proposition}

\theoremstyle{definition}
\newtheorem{defn}[theorem]{Definition}
\theoremstyle{remark}
\newtheorem{remark}[theorem]{Remark}

\newtheorem{example}[theorem]{Example}


\newcommand{\Ass}{\operatorname{Ass}}

\newcommand{\im}{\operatorname{im}}
\renewcommand{\ker}{\operatorname{ker}}

\newcommand{\Ext}{\operatorname{Ext}}

\newcommand{\Hom}{\operatorname{Hom}}

\newcommand{\End}{\operatorname{End}}

\newcommand{\soc}{\operatorname{soc}}
\newcommand{\coker}{\operatorname{coker}}

\newcommand{\rank}{\ensuremath{\operatorname{rank}}}

\newcommand{\p}{\mathfrak{p}}

\newcommand{\m}{\mathfrak{m}}

\newcommand{\w}{\omega}
\renewcommand{\bar}{\overline}







\begin{document}
\title[Endomorphism Algebras over Commutative Rings and Torsion in Self Tensor Products]{Endomorphism Algebras Over Commutative Rings and Torsion in Self Tensor Products}

\author[Lyle]{Justin Lyle}
\email[Justin Lyle]{jll0107@auburn.edu}
\urladdr{https://jlyle42.github.io/justinlyle/}
\address{Department of Mathematics and Statistics \\ 221 Parker Hall\\
	Auburn University\\
	Auburn, AL 36849}

\subjclass[2020]{Primary 13C12,13H99; Secondary 16S50}

\keywords{tensor product, endomorphism ring, torsion, $\mbox{}^*$-algebras, stable ideal}

\begin{abstract}

Let $R$ be a commutative Noetherian local ring. We study tensor products involving a finitely generated $R$-module $M$ through the natural action of its endomorphism ring. In particular, we study torsion properties of self tensor products in the case where $\End_R(M)$ has an $R^*$-algebra structure, and prove that if $M$ is indecomposable, then $M \otimes_{\End_R(M)} M$ must always have torsion in this case under mild hypotheses. 
    
\end{abstract}

\maketitle

\section{Introduction}

Tensor products are among the most ubiquitous objects in commutative algebra, yet they possess many aspects that remain mysterious. For instance, there is a general philosophy that tensor products of finitely generated modules over a local domain should be expected to have torsion, or should only have good depth properties under stringent hypotheses. Nonetheless, examples exist that show this philosophy does not hold in general, and so numerous open conjectures and problems have emerged that give a precise foothold to this intuition, with much work being put toward them; see \cite{Au61,CG19,CI15,GT15,HI19,HW94,HW97} for a few examples. 

In particular, while it is well-known that $I \otimes_R I$ must always have torsion when $I$ is an ideal of positive grade in a local ring $R$, a construction due to Huneke-Wiegand shows that any non-Gorenstein complete local domain $R$ of dimension $1$ admits a finitely generated torsion-free $R$-module $M$ that is not free but for which $M \otimes_R M$ remains torsion-free \cite[Proposition 4.7]{HW94}. Motivated by these examples, and viewing the torsion properties of $I \otimes_R I$ as being owed in part to the commutativity of $\End_R(I)$, we study self tensor products involving a finitely generated $R$-module $M$ by exploiting the natural left module structure of $M$ over its endomorphism ring. When such tensor products can be calculated over $\End_R(M)$ instead of $R$, they tend to be smaller and more manageable. Leveraging this insight, we consider torsion properties of the $R$-module $M \otimes_{\End_R(M)} M$ in the situation when $\End_R(M)$ possesses an $R^*$-algebra structure. Our main theorem shows that the behavior of the ideal setting extends to indecomposable modules of higher rank in this situation under mild hypotheses, and reveals the torsion-free property of $M \otimes_{\End_R(M)} M$ as an often more relevant obstruction than that of $M \otimes_R M$ (see Theorem \ref{mainthm2}):

\begin{theorem}\label{introthm2}
Suppose $R$ is a Henselian domain and that $M$ is an indecomposable torsion-free $R$-module. Suppose $\End_R(M)$ has an $R^*$-algebra structure, and equip $M$ with the induced right $\End_R(M)$-module action through this structure. If $M \otimes_{\End_R(M)} M$ is a torsion-free $R$-module, then $M$ is a cyclic $E$-module. 
\end{theorem}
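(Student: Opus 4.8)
The plan is to exploit the $R^*$-algebra structure on $E := \End_R(M)$ to reduce the torsion-freeness of $M \otimes_E M$ to a statement about an ideal. Write $(-)^* = \Hom_R(-, R)$ for the duality functor coming from the $R^*$-algebra structure; the key feature is that $E$ is then reflexive and equipped with an $R$-linear anti-involution, which lets me identify the right $E$-module $M$ (equipped with the induced action) with $M^* = \Hom_R(M,R)$ as a left $E$-module, up to the twist by the anti-involution. So $M \otimes_E M \cong M^* \otimes_E M$, and there is a natural evaluation map $\theta \colon M^* \otimes_E M \to R$ sending $f \otimes m \mapsto f(m)$; its image is the \emph{trace ideal} $\tr_R(M) = \sum_{f \in M^*} f(M)$, which has positive grade since $R$ is a domain and $M$ is a nonzero torsion-free (hence faithful, after noting $E$ acts on $M$) module. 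The first main step is to check that $\theta$ is well-defined over $E$ and to analyze its kernel: generically (after localizing at $(0)$, or more precisely at the minimal primes, using that $R$ is a domain and $M$ is torsion-free so $M_{(0)}$ is a vector space over the fraction field $K$, with $E_{(0)}$ a $K$-algebra acting on it) the map $\theta$ becomes the evaluation $\Hom_K(V,K) \otimes_{E_K} V \to K$ for a finite-dimensional $E_K$-module $V$.

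The second step is the heart of the argument: because $M$ is indecomposable and $R$ is Henselian, $E$ is a (possibly noncommutative) local ring, so $E_K$ is a local Artinian $K$-algebra, and $V$ is an $E_K$-module. I want to show that the evaluation pairing $\Hom_K(V,K) \otimes_{E_K} V \to K$ is injective (equivalently, that $M \otimes_E M$ is torsion-free, which forces this generic map to be injective) \emph{only if} $V$ is cyclic over $E_K$ — and then to upgrade "cyclic over $E_K$" to "cyclic over $E$" using Nakayama over the local ring $E$. The underlying linear-algebra fact is that for a finite-dimensional algebra $A$ over a field with anti-involution, and a module $V$, the composite $V^* \otimes_A V \to A^* =: A$ (or $\to K$ after applying the counit/trace) has kernel detecting the failure of $V$ to be generated by one element; concretely, if $V$ needs $\mu \geq 2$ generators then one builds an explicit nonzero element of the kernel of $\theta$ out of a pair of generators and a relation forced by dimension count, which then survives to a torsion element of $M \otimes_E M$. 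I would carry this out by choosing a minimal generating set $m_1, \dots, m_\mu$ of $V$ over $E_K$ and the dual "coordinate functionals," then exhibiting $\sum \text{(something)} \otimes m_i$ that maps to $0$ but is nonzero in $V^* \otimes_{E_K} V$ by a rank/length comparison $\length_K(V^* \otimes_{E_K} V) > \length_K(\tr)$ when $\mu \geq 2$.

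The third step is the descent: having shown $M_{(0)}$ is cyclic over $E_{(0)} = E_K$, I use that $E$ is module-finite over $R$ and local (Henselian $R$, indecomposable $M$) together with Nakayama's lemma to conclude that $M$ is cyclic over $E$. For this I need $M$ to be "generically cyclic implies cyclic" over $E$, which follows because a single element $m \in M$ with $Em$ of full generic rank generates $M/\mathfrak{m}_E M$ — here one must be a little careful that the obstruction $M/Em$ is torsion as an $R$-module, hence killed after tensoring appropriately, and then apply Nakayama over the local ring $E$ with its Jacobson radical $\mathfrak{m}_E$; minimality of the generating count over the local ring $E$ then gives exactly one generator.

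The main obstacle I anticipate is the second step — pinning down precisely why torsion-freeness of $M \otimes_E M$ forces the generic module $V$ to be cyclic over $E_K$. The inequality of lengths is believable but the bookkeeping is delicate: one must correctly account for the anti-involution twist in identifying $M$ with $M^*$ as right/left modules, ensure the constructed kernel element is genuinely nonzero in the tensor product over the \emph{noncommutative} ring $E_K$ (tensor products over noncommutative rings are smaller and less transparent), and verify that nonvanishing there propagates to an $R$-torsion element of $M \otimes_E M$ rather than getting killed by the passage from the generic fiber back to $R$. I would handle the last point by noting $M \otimes_E M$ is a finitely generated $R$-module and its torsion submodule is exactly the kernel of localization at $(0)$, so a nonzero generic kernel element is visible as torsion; and I would handle the noncommutative-tensor subtlety by working with the explicit presentation of $V$ over $E_K$ and tracking relations by hand.
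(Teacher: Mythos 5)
Your proposal contains a fundamental misconception in its second and third steps that makes the overall strategy unworkable. You claim that ``because $M$ is indecomposable and $R$ is Henselian, $E$ is a (possibly noncommutative) local ring, so $E_K$ is a local Artinian $K$-algebra.'' The second half of this is false: localizing at $(0)$ kills the Jacobson radical, and since $M$ is torsion-free of some rank $n$, one has $E_K \cong M_n(K)$, a simple Artinian ring which is not local when $n>1$. Moreover, $V = M_K \cong K^n$ is \emph{always} cyclic over $M_n(K)$ (the column space is a simple module), so the generic fiber can never detect whether $M$ needs more than one generator over $E$. Consequently your second step --- trying to extract a nonzero kernel element of the generic evaluation map when $\mu \geq 2$ --- is attacking a situation that never occurs, and your third step --- descending ``generically cyclic'' to ``cyclic'' --- is vacuous for the same reason. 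You cannot see the obstruction by passing to the generic point; the content of the theorem lives entirely in the local structure of $E$ modulo its Jacobson radical.

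There is also an earlier problem. You write $(-)^* = \Hom_R(-,R)$ and assert an identification of the twisted right $E$-module $M_*$ with $\Hom_R(M,R)$. This is unjustified: $M_*$ is $M$ itself with the right action $x \cdot f = f^*(x)$, where $*$ is the anti-involution on $E$, and it has the same underlying $R$-module as $M$. There is no reason for this to agree with $\Hom_R(M,R)$, which can have entirely different invariants. (Such an identification does hold when $M$ is strongly self-dual with respect to $R$, but that is a special assumption, not a consequence of $E$ carrying an $R^*$-structure.) The evaluation/trace map you build on top of this identification therefore does not exist in general, and the ``trace ideal'' bookkeeping in your first step has no foundation.

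The correct mechanism, which your proposal never touches, is this: after localizing, the anti-involution on $M_n(Q)$ has the form $A^* = C^{-1}A^T C$ with $C^T = aC$ and $a^2 = 1$ (this is a genuine structural fact requiring an argument about automorphisms of matrix rings), and in the generic tensor product $Q^{n\times 1}_* \otimes_{M_n(Q)} Q^{n\times 1} \cong Q$ one has the relation $x \otimes y = a\, y \otimes x$. Since $R$ is a domain, $a = \pm 1$. The submodule $T = \langle x \otimes y - a\, y \otimes x \rangle$ of $M_* \otimes_E M$ localizes to zero and is therefore torsion, so torsion-freeness forces $T = 0$. But if $x,y$ are two elements of a minimal $E$-generating set of $M$, then reducing modulo the Jacobson radical $J(E)$ --- this is where locality of $E$ enters, not any statement about $E_K$ --- one produces a nonzero image of $x \otimes y - a\, y \otimes x$, so $T \neq 0$. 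Hence $M$ must be cyclic over $E$. The crucial point you are missing is that the obstruction to cyclicity is detected by $E/J(E)$, not by $E_K$, and the torsion element is built from a skew-symmetrizer forced by the involution, not from a counting argument on the generic fiber.
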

We also provide an extended example (Example \ref{selfdual}) that serves as a counterpoint to the aforementioned construction of Huneke-Wiegand, and shows that even though $M$ and $M \otimes_R M$ are torsion-free in their examples, $M \otimes_{\End_R(M)} M$ may still have torsion.

\section{Preliminaries}

Throughout, we let $(R,\m,k)$ be a commutative Noetherian local ring. All modules are assumed to be finitely generated. We note that an $R$-module $M$ carries a natural left module action of $\End_R(M)$, given by $f \cdot x=f(x)$, and $\Hom_R(M,N)$ carries the structure of an $\End_R(N)-\End_R(M)$ bimodule by composition in the natural way. We recall the $R$-module $M$ is said to have \emph{rank} $r$ if $M_{\p} \cong R_{\p}^{\oplus r}$ for all $\p \in \Ass_R(R)$, equivalently if $M \otimes_R Q$ is a free $Q$-module of rank $r$ where $Q$ denotes the total quotient ring of $R$. We let $\mu_R(M)$ denote the minimal number of generators of $M$ over $R$ and we write $(-)^{\dagger}:=\Hom_R(-,R)$.

We recall the following observation due to Huneke-Wiegand:
\begin{prop}[{\cite[Proposition 4.7]{HW94}}]\label{hwex}
Suppose $R$ is a local domain of dimension $1$ with canonical module $\w$ that is not Gorenstein. Then there exists a nonfree torsion-free $R$-module $M$ so that $M \otimes_R M$ is torsion-free.
\end{prop}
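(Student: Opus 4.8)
The plan is to manufacture $M$ from the canonical module $\w$, whose homological rigidity --- $\Hom_R(\w,\w)\cong R$, $\Ext^i_R(\w,\w)=0$ for $i>0$, and finite injective dimension equal to $\dim R=1$ --- is what should keep a self tensor product torsion-free. First I would record why $M$ cannot be taken (isomorphic to) an ideal: since $R$ is not Gorenstein, $\w$ is not free, so any ideal $I\cong\w$ has positive grade and is non-principal, whence $I\otimes_R I$ has torsion by the well-known fact recalled in the introduction; running the same argument on the indecomposable direct summands of $M$ shows that such an $M$ must have an indecomposable summand of rank at least $2$. So I would realize $\w$ as a canonical ideal $\mathfrak c\subseteq R$ --- legitimate because $\w$ is torsion-free of rank one, hence embeds in $Q$ and, after clearing denominators, in $R$ --- with $\mu_R(\mathfrak c)=\type(R)=:r\ge 2$, and then search for $M$ of rank $\ge 2$ built from $\mathfrak c$.

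Whatever $M$ is, it should fit in a short exact sequence $0\to M\to R^{n}\to C\to 0$ with $C$ a direct sum of copies of $\mathfrak c$ (or one of its syzygies), so that $M$ is torsion-free as a submodule of a free module, and $M$ is not free --- otherwise the sequence would give $\pd_R\w<\infty$, and Auslander--Buchsbaum together with $\depth_R\w=\dim R=\depth R$ would force $\w$ free and $R$ Gorenstein. The naive choice $M=\Omega^1_R\mathfrak c$ is too crude when $r=2$, since then it has rank one and is again an ideal, so some care is needed: I would expect $M$ to be an indecomposable module of rank $r$, obtained for instance as a pullback of two minimal surjections $R^{r}\twoheadrightarrow\mathfrak c$, or through the duality $(-)^{\vee}:=\Hom_R(-,\w)$, which is exact on maximal Cohen--Macaulay modules (because $\Ext^{>0}_R(\mathrm{MCM},\w)=0$) and sends $\w$ to $R$.

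The crux is then showing that $M\otimes_R M$ is torsion-free, which over a one-dimensional local domain is equivalent to $\depth(M\otimes_R M)\ge 1$, i.e.\ to a parameter $x$ being a nonzerodivisor on it, i.e.\ to $\Tor_1^R(R/xR,\,M\otimes_R M)=0$. Tensoring the sequence $0\to M\to R^{n}\to C\to 0$ by $M$ yields
\[
0\to \Tor_1^R(C,M)\to M\otimes_R M\to M^{n}\to C\otimes_R M\to 0 ,
\]
and since $M^{n}$ is torsion-free the torsion submodule of $M\otimes_R M$ is exactly $\Tor_1^R(C,M)$; a rank count shows this $\Tor$ module already has rank zero, so everything reduces to showing it has no torsion at $\m$. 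Here the chosen structure of $M$, together with the vanishing of the higher $\Ext$ of $\w$ into itself via the duality above, must be leveraged to force the offending $\Tor$ to vanish.

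The step I expect to be the real obstacle is precisely the construction of an $M$ making $\Tor_1^R(C,M)=0$, carried out \emph{uniformly} over every non-Gorenstein $R$: the tight case $\type(R)=2$ already rules out plain syzygies of $\w$, so one must produce an honestly higher-rank indecomposable $M$ and then track the relevant $\Tor$ through $\w$-duality rather than through a bare dimension count. That is where essentially all the work lies.
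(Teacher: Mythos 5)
There is a genuine gap: the proposition asserts the \emph{existence} of a module $M$ with the stated properties, and your proposal never actually produces one. You reduce the problem correctly in two respects --- $M$ must have an indecomposable summand of rank at least $2$ (since a non-principal ideal $I$ of positive grade always has torsion in $I\otimes_R I$), and for a presentation $0\to M\to R^n\to C\to 0$ the torsion submodule of $M\otimes_R M$ is exactly $\Tor_1^R(C,M)$ --- but then you explicitly defer the construction of $M$ and the vanishing of the relevant $\Tor$ to ``where essentially all the work lies.'' That deferred step \emph{is} the content of the proposition, so the proposal does not constitute a proof.

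Moreover, the ansatz you propose points in the wrong direction. The Huneke--Wiegand module is not naturally a syzygy of (sums of) canonical ideals nor a pullback of surjections onto $\mathfrak c$; it is the middle term of a suitably chosen \emph{non-split extension}
\[
0 \longrightarrow R \longrightarrow M \longrightarrow \w \longrightarrow 0,
\]
i.e.\ a nonzero class in $\Ext^1_R(\w,R)$ --- and the non-Gorenstein hypothesis enters precisely through the nonvanishing of $\Ext^1_R(\w,R)$ in dimension one, not (as in your sketch) through an Auslander--Buchsbaum argument showing $M$ is non-free. This is visible in the paper's Example \ref{selfdual}, where $M$ arises as a pushout producing exactly such an extension $0\to R\to M\to I\to 0$ with $I$ a canonical ideal, $\mu_R(M)=\mu_R(I)+1$ and $\rank(M)=2$. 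The torsion-freeness of $M\otimes_R M$ is then proved in \cite{HW94} by exploiting $\w$-duality applied to this extension (yielding $0\to R\to M^{\vee}\to\w\to 0$ and ultimately $M\cong M^{\vee}$) rather than by your dimension count on $\Tor_1^R(C,M)$. Note also that the paper itself does not reprove this statement --- it is quoted from \cite[Proposition 4.7]{HW94} --- so to make your argument complete you would need to supply both the choice of extension class and the duality computation.
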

A counterpoint to this observation is the case where $I$ is an ideal of positive grade; in this case it is well-known that $I \otimes_R I$ can only be torsion-free if $I$ is principal. It's also well-known (see e.g. \cite[Exercise 4.3.1]{LW12}) that $\End_R(I)$ is commutative when $I$ has positive grade, and from the injection of rings $R \to \End_R(I)$, we have a surjection $I \otimes_R I \to I \otimes_{\End_R(I)} I$. By rank considerations, the kernel of this surjection must be torsion. So if $I \otimes_R I$ is torsion-free, then this surjection is an isomorphism and $I \otimes_{\End_R(I)} I$ is torsion-free as well. 

In view of Proposition \ref{hwex}, we interpret that the torsion-freeness of $I \otimes_{\End_R(I)} I$ is an often more relevant obstruction to consider, and we seek to exploit this observation for modules of higher rank. To even be able to form the $R$-module $M \otimes_{\End_R(M)} M$, we need to work in a situation in which $M$ possesses a right module structure over $\End_R(M)$. This is guaranteed by the commutativity of $\End_R(I)$ when $I$ is an ideal of positive grade, but if $M$ has rank $r>1$, then $\End_R(M)$ cannot be commutative, as $\End_R(M)_{\p} \cong \End_{R_{\p}}(M_{\p}) \cong M_r(R_{\p})$, which is noncommutative for all $\p \in \Ass_R(R)$. We thus consider instead the following:

\begin{defn}

A (possibly noncommutative) $R$-algebra $E$ is said to be an $R^*$-algebra if there is an $R$-linear ring involution $(-)^*:E \to E^{op}$. In other words, $*$ is a map of abelian groups such that
\begin{enumerate}
\item[$(1)$] $(ab)^*=b^*a^*$ for all $a,b \in E$.

\item[$(2)$] $1_E^*=1_E$.

\item[$(3)$] $(ra)^*=ra^*$ for all $r \in R$ and $a \in E$.

\item[$(4)$] $a^{**}=a$ for all $a \in E$.

\end{enumerate}
    
\end{defn}

$R^*$-algebras, while usually noncommutative, share many of the module theoretic properties of a commutative algebra. In particular, if $E$ is an $R^*$-algebra and $M$ is a left $E$-module, then $M$ inherits a right $E$-module structure given by $x \cdot f=f^*x$ for $x \in M$ and $f \in E$. One should take caution that these actions are not compatible in the sense that they do not induce an $(E,E)$ bimodule structure on $M$ in general. However, if we take the natural $(E,R)$ bimodule structure induced by the left action of $E$ on $M$, then there is an induced $(R,R)$ bimodule structure inherited from this right action of $E$ on $M$ that agrees with the natural one owed to the commutativity of $R$. For clarity, for a given left $E$-module $M$, we will write $M_*$ to indicate $M$ viewed as a right module under this induced structure. 

\begin{example}\label{exs}

The following are some motivating examples of $R^*$-algebras:

\begin{enumerate}

\item[$(1)$] Any commutative $R$-algebra $S$ where $(-)^*$ is the identity. Of particular note $\End_R(I)$ when $I$ is an ideal of positive grade.

\item[$(2)$] $M_n(R)$ for any $n$ where $(-)^*$ is the matrix transpose.

\item[$(3)$] If $R=\mathbb{R}$, then $\mathbb{C}$ under complex conjugation.

\item[$(4)$] If $E$ and $S$ are $R^*$-algebras under $(-)^{*_E}$ and $(-)^{*_S}$ respectively, then $E \otimes_R S$ is an $R^*$-algebra under $(-)^{*}$ given by $(x \otimes s)^{*}=x^{*_E} \otimes s^{*_S}$. In particular, if $S$ is commutative, e.g, a localization or completion of $R$, then $E \otimes_R S$ then $(-)^{*}$ may be given by $(x \otimes s)^{*}=x^{*_E} \otimes s$.

\end{enumerate}

\end{example}

Another significant source of $R^*$-algebras is given by the following:

\begin{defn}

We say an $R$-module $M$ is \emph{strongly self dual} with respect to an $R$-module $N$ if there is an $R$-module isomorphism $\alpha:M \to \Hom_R(M,N)$ such that $\alpha(x)(y)=\alpha(y)(x)$ for all $x,y \in M$.

\end{defn}

\begin{prop}\label{stronglyself}

If $M$ is strongly self dual with respect to $N$ then $\End_R(M)$ has an $R^*$-algebra structure so that $M_* \cong \Hom_R(M,N)$ as right $\End_R(M)$-modules.

\end{prop}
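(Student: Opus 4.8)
The plan is to use the isomorphism $\alpha$ to transport onto $\End_R(M)$ the canonical anti\-homomorphism "precompose with $f$'' that lives naturally on $\End_R(\Hom_R(M,N))$, and then to show that the symmetry hypothesis $\alpha(x)(y)=\alpha(y)(x)$ is exactly what is needed to make the resulting operation an involution. Concretely, set $E:=\End_R(M)$ and, for $f\in E$, define $f^{*}\in E$ to be the unique $R$-linear endomorphism determined by
\[
\alpha(f^{*}(x)) = \alpha(x)\circ f \qquad \text{for all } x\in M,
\]
i.e. $\alpha(f^{*}(x))(y)=\alpha(x)(f(y))$ for all $x,y\in M$; existence, uniqueness, and $R$-linearity of $f^{*}$ are immediate because $\alpha$ is an $R$-module isomorphism and $\phi\mapsto\phi\circ f$ is an $R$-linear self-map of $\Hom_R(M,N)$.

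Next I would check the axioms. Additivity of $(-)^{*}$ and axioms $(2)$ and $(3)$ are formal consequences of the defining relation together with the $R$-linearity of $\alpha$ and of each map $\alpha(x)$; for instance $\alpha(1^{*}(x))(y)=\alpha(x)(y)$ gives $1_E^{*}=1_E$. Axiom $(1)$ is a short diagram chase: $\alpha((fg)^{*}(x))(y)=\alpha(x)(f(g(y)))$, while $\alpha((g^{*}f^{*})(x))(y)=\alpha(f^{*}(x))(g(y))=\alpha(x)(f(g(y)))$, so $(fg)^{*}=g^{*}f^{*}$. The one place the hypothesis is genuinely used — and the step I expect to be the crux — is axiom $(4)$, which I would verify by the computation
\[
\alpha(f^{**}(x))(y)=\alpha(x)(f^{*}(y))=\alpha(f^{*}(y))(x)=\alpha(y)(f(x))=\alpha(f(x))(y),
\]
where the second and fourth equalities are applications of the symmetry condition and the third is the defining relation for $f^{*}$; since $y$ is arbitrary and $\alpha$ is injective, this yields $f^{**}=f$. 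It is worth emphasizing that it is precisely the "strongly'' in strongly self dual (the compatibility $\alpha(x)(y)=\alpha(y)(x)$, as opposed to the mere existence of an isomorphism $M\cong\Hom_R(M,N)$) that makes this equality hold.

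Finally, for the module assertion, I would note that the defining relation $\alpha(f^{*}(x))=\alpha(x)\circ f$ says exactly that $\alpha$ carries the right $E$-action on $M_*$ — which by the recipe recalled before the proposition is $x\cdot f=f^{*}(x)$ — to the right $E$-action on $\Hom_R(M,N)$ by precomposition, i.e. the right-hand factor of its $\End_R(N)$-$\End_R(M)$ bimodule structure. Since $\alpha$ is already an isomorphism of $R$-modules, it is therefore an isomorphism $M_*\cong\Hom_R(M,N)$ of right $\End_R(M)$-modules, which completes the proof.
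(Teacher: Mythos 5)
Your proposal is correct and follows essentially the same route as the paper: the involution is defined by exactly the same formula, $f^{*}=\alpha^{-1}\circ\Hom_R(f,N)\circ\alpha$ (your defining relation $\alpha(f^{*}(x))=\alpha(x)\circ f$ is just this written elementwise), and the right-linearity of $\alpha$ is verified the same way. The only cosmetic difference is in checking $f^{**}=f$: the paper identifies $\Hom_R(\alpha^{-1},N)\circ\alpha$ with the natural biduality map and invokes the commutativity of the biduality square, whereas you do the equivalent direct element computation using the symmetry $\alpha(x)(y)=\alpha(y)(x)$ twice.
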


\begin{proof}

Suppose $M$ is strongly self dual with respect to $N$ so that there is an $R$-module isomorphism $\alpha:M \to \Hom_R(M,N)$ satisfying $\alpha(x)(y)=\alpha(y)(x)$ for all $x,y \in M$. Define $(-)^{*}:\End_R(M) \to \End_R(M)^{op}$ by $f^{*}=\alpha^{-1} \circ \Hom_R(f,N) \circ \alpha$. It is clear that $(-)^{*}$ is an $R$-linear ring homomorphism. We claim it is an involution. We have 
\[f^{**}=\alpha^{-1} \circ \Hom_R(f^*,N) \circ \alpha=\alpha^{-1} \circ \Hom_R(\alpha^{-1} \circ \Hom_R(f,N) \circ \alpha^{-1},N) \circ \alpha\]
\[=\alpha^{-1} \circ \Hom_R(\alpha,N) \circ \Hom_R(\Hom_R(f,N),N) \circ \Hom_R(\alpha^{-1},N) \circ \alpha.\]
We observe for any $x \in M$ that $(\Hom_R(\alpha^{-1},N) \circ \alpha)(x)=\alpha(x) \circ \alpha^{-1}$. But if $g=\alpha(y) \in \Hom_R(M,N)$, then $\alpha(x)(\alpha^{-1}(g))=\alpha(x)(y)=\alpha(y)(x)=g(x)$. In other words, $\Hom_R(\alpha^{-1},N) \circ \alpha: M \to \Hom_R(\Hom_R(M,N),N)$ is the natural biduality map, and it is well known (and easy to see) that the diagram
\[\begin{tikzcd}[column sep=4em,row sep=large]
	M && M \\
	{\Hom_R(\Hom_R(M,N),N)} && {\Hom_R(\Hom_R(M,N),N)}
	\arrow[from=1-3, to=2-3]
	\arrow[from=1-1, to=2-1]
	\arrow["{\Hom_R(\Hom_R(f,N),N)}", from=2-1, to=2-3]
	\arrow["f", from=1-1, to=1-3]
\end{tikzcd}\]
whose vertical arrows are the biduality map, commutes for any $f \in \End_R(M)$. So $f^{**}=f$ for all $f \in \End_R(M)$, and therefore $(-)^{*}$ induces an $R^*$-algebra structure on $\End_R(M)$. We claim $\alpha:M_* \to \Hom_R(M,N)$ is right $\End_R(M)$-linear. Indeed, $\alpha(x \cdot f)=\alpha(f^*(x))=\alpha(\alpha^{-1}(\Hom_R(f,N)(\alpha(x))))=\Hom_R(f,N)(\alpha(x))=\alpha(x) \circ f$, so $\alpha:M_* \to \Hom_R(M,N)$ gives an isomorphism of right $\End_R(M)$-modules, as desired. 
\end{proof}

\begin{remark}
The notion of strongly self dual modules has appeared in various guises in the literature. For instance, an Artinian local ring $R$ is said to be \emph{Teter} if can be expressed as $S/\soc(S)$ for an Artinian Gorenstein local ring $S$. The original work of Teter on these rings characterizes Teter rings as those Artinian local rings for which $\m$ is strongly self dual with respect to $E_R(k)$; see \cite[Lemma 1.1 and Theorem 2.3]{Te74}. In particular, a consequence of Proposition \ref{stronglyself} is that $\End_R(\m)$ always has an $R^*$-algebra structure whenever $R$ is Teter. Another concrete example is given in Example \ref{selfdual} and the pushforward construction there can be used to produce numerous other examples as well.
\end{remark}

\section{Main Results}\label{rstar}

In this section we provide the proof of Theorem \ref{introthm2}. We will need a few preparatory results, beginning with the following that is inspired from \cite{Is80}:

\begin{prop}\label{*algstruc}
Suppose $R$ is a Noetherian semi-local ring. A map $(-)^*: M_n(R) \to M_n(R)^{op}$ induces an $R^*$-algebra structure on $M_n(R)$ if and only if there is an invertible matrix $C \in M_n(R)$ and an $a \in R$ so that 
\begin{enumerate}
\item[$(1)$] $C^T=aC$.
\item[$(2)$] $a^2=1$.
\item[$(3)$] $(-)^*$ is given by $A^*=C^{-1}A^TC$.
    
\end{enumerate}

\end{prop}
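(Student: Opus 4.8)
The plan is to prove the two implications separately; essentially all the content is in the ``only if'' direction, which rests on the classical fact that every $R$-algebra automorphism of $M_n(R)$ is inner when $R$ is semi-local.

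\emph{The ``if'' direction.} Suppose $C$ and $a$ satisfy $(1)$--$(3)$ and put $A^* = C^{-1}A^TC$. Then $(-)^*$ is clearly $R$-linear with $1^* = 1$, and $(AB)^* = C^{-1}B^TA^TC = (C^{-1}B^TC)(C^{-1}A^TC) = B^*A^*$, so it is an $R$-linear anti-endomorphism of $M_n(R)$. A direct computation gives $A^{**} = C^{-1}C^T A (C^T)^{-1}C$; by $(1)$ the matrix $B := C^{-1}C^T$ equals $aI_n$, and by $(2)$ we have $B^{-1} = aI_n$ as well, so $A^{**} = (aI_n)A(aI_n) = a^2A = A$. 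Hence $(-)^*$ is an involution and $M_n(R)$ is an $R^*$-algebra under it.

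\emph{The ``only if'' direction.} Suppose $(-)^*\colon M_n(R)\to M_n(R)^{op}$ is an $R$-linear ring involution. Since the transpose is also an $R$-linear ring isomorphism $M_n(R)\to M_n(R)^{op}$, the composite $\phi(A) := (A^*)^T$ is an $R$-algebra automorphism of $M_n(R)$. Here I invoke the Skolem--Noether--type theorem that over a semi-local ring every $R$-algebra automorphism of $M_n(R)$ is inner (this reduces, via the Morita equivalence between $M_n(R)$-modules and $R$-modules, to the triviality of $\mathrm{Pic}(R)$, equivalently the freeness of finitely generated rank-one projectives over a semi-local ring; cf.\ \cite{Is80}). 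Thus $\phi(A) = UAU^{-1}$ for some invertible $U\in M_n(R)$, and setting $C := U^T$ (invertible) gives $A^* = \phi(A)^T = (UAU^{-1})^T = (U^{-1})^TA^TU^T = C^{-1}A^TC$, which is $(3)$.

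Finally I extract $(1)$ and $(2)$ from involutivity. Substituting the formula for $(-)^*$ into $A = A^{**}$ yields $A = (C^{-1}C^T)A(C^{-1}C^T)^{-1}$ for all $A\in M_n(R)$, so $B := C^{-1}C^T$ lies in the center $R\cdot I_n$ of $M_n(R)$; write $B = aI_n$ with $a\in R$, necessarily a unit since $B$ is invertible. Then $C^T = aC$, which is $(1)$, and transposing this identity gives $C = (aC)^T = aC^T = a^2C$, whence $a^2 = 1$ after cancelling the invertible $C$; this is $(2)$. The only genuine obstacle is the inner-automorphism input; if one prefers a self-contained argument, the point is that $\phi(e_{11})$ and $e_{11}$ generate isomorphic left ideals of $M_n(R)$ (because the corresponding rank-one projective $R$-module is free over the semi-local ring $R$), hence are conjugate, after which adjusting $\phi$ by that conjugation reduces to the case $\phi(e_{11}) = e_{11}$, and inner-ness follows by tracking the images of the remaining matrix units.
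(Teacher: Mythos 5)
Your proof is correct and follows essentially the same route as the paper: compose $(-)^*$ with the transpose to get an $R$-algebra automorphism of $M_n(R)$, invoke the fact (from Isaacs, using that finitely generated rank-one projectives over a semi-local ring are free) that such automorphisms are inner, and then use involutivity to show $C^{-1}C^T$ is central, yielding $(1)$ and $(2)$. The only cosmetic difference is that you form $(A^*)^T$ where the paper forms $(A^T)^*$, which changes nothing substantive.
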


\begin{proof}

It is clear that any such $(-)^*$ with $C$ and $a$ satifying conditions $(1)-(3)$ gives an $R^*$-algebra structure. Conversely, suppose $(-)^*$ induces an $R^*$-algebra structure, and let $\alpha:M_n(R) \to M_n(R)$ be given by $\alpha(A)=(A^T)^*$. Then $\alpha$ is an $R$-algebra automorphism of $M_n(R)$. Note that if $M^{\oplus n}$ is a free $R$-module of rank $n$ for some $R$-module $M$, then as $R$ is semi-local, it must be that $M \cong R$. Thus by \cite[Theorem 16]{Is80}, it follows that $\alpha$ is an inner automorphism. Then there is an invertible matrix $C$ so that $\alpha(A)=C^{-1}AC$ for all $A \in M_n(R)$. But this means $\alpha(A^T)=A^*=C^{-1}A^TC$ for all $A \in M_n(R)$. But then
\[A=A^{**}=C^{-1}(A^*)^TC=C^{-1}C^TA(C^T)^{-1}C\]
which implies $AC^{-1}C^T=C^{-1}C^TA$ for all $A \in M_n(R)$. Thus $C^{-1}C^T$ is is the center of $M_n(R)$, so $C^{-1}C^T=aI_n$ for some $a \in R$. Then $C^T=aC$. But then $C=(aC)^T=aC^T=a^2C$. As $C$ is invertible, we have $a^2=1$. 
\end{proof}

\begin{lem}\label{localtorsion}
If $(-)^*:M_n(R) \to M_n(R)^{op}$ induces an $R^*$-algebra structure on $M_n(R)$, so that by Proposition \ref{*algstruc}, $(-)^*$ is given by $A^*=C^{-1}A^TC$ with $C^T=aC$ and $a^2=1$, then the map $\theta:R^{n \times 1}_* \otimes_{M_n(R)} R^{n \times 1} \to R$ given by $\theta(x \otimes y)=x \cdot Cy$ is an isomorphism of $R$-modules (where $\cdot$ denotes the usual dot product). In particular, for any $x,y \in R^{n \times 1}$, we have $x \otimes y-ay \otimes x=0$ as an element of $R^{n \times 1}_* \otimes_{M_n(R)} R^{n \times 1}$. 

\end{lem}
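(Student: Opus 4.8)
The plan is to reduce the statement to a Morita-theoretic identification together with one short bilinear computation. First I would record the right module structure being used: since the left $M_n(R)$-action on $R^{n \times 1}$ is matrix multiplication, the induced right action on $M_* := R^{n \times 1}_*$ is $x \cdot A = A^* x = C^{-1} A^T C x$ for $A \in M_n(R)$, $x \in R^{n \times 1}$; note $(rI_n)^* = r I_n$, so the $R$-module structure on the tensor product is the obvious one. Throughout I would freely use $a^{-1} = a$ (from $a^2 = 1$) and $(C^{-1})^T = (C^T)^{-1} = a^{-1} C^{-1} = a C^{-1}$ (from $C^T = aC$).

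The crux is to identify $M_*$ with the right $M_n(R)$-module $R^{1 \times n}$ of row vectors (right action by matrix multiplication) via $\phi \colon M_* \to R^{1 \times n}$, $\phi(x) = x^T C$. This is an $R$-isomorphism because $C$ is invertible, and it is right $M_n(R)$-linear: $\phi(x \cdot A) = (C^{-1} A^T C x)^T C = x^T C^T A (C^{-1})^T C = a\, x^T C^T A = a^2\, x^T C A = x^T C A = \phi(x) \cdot A$. Next I would invoke the standard Morita isomorphism $R^{1 \times n} \otimes_{M_n(R)} R^{n \times 1} \xrightarrow{\sim} R$ given by matrix multiplication $r \otimes c \mapsto rc$. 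Composing $\phi \otimes \id$ with this isomorphism yields an $R$-isomorphism $M_* \otimes_{M_n(R)} R^{n \times 1} \to R$ carrying $x \otimes y$ to $(x^T C) y = x^T C y = x \cdot Cy$; this is exactly $\theta$, so $\theta$ is an isomorphism. If one prefers to avoid citing Morita theory, the same conclusion follows by checking directly that $\theta$ is $M_n(R)$-balanced — the computation $\theta(x \cdot A \otimes y) = x^T C A y = \theta(x \otimes Ay)$ using the identities above — that $\theta$ is surjective since $\theta(e_1 \otimes C^{-1} e_1) = 1$, and that every element of $M_* \otimes_{M_n(R)} R^{n \times 1}$ can be rewritten in the form $z \otimes e_1$, after which injectivity is immediate from bijectivity of $\phi$.

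For the final assertion, since $\theta$ is injective it suffices to show $\theta(x \otimes y) = \theta(ay \otimes x)$. The right-hand side is $a\,(y^T C x)$, and since $y^T C x$ is a $1 \times 1$ matrix it equals its transpose $x^T C^T y = a\, x^T C y$; hence $\theta(ay \otimes x) = a^2\, x^T C y = x^T C y = \theta(x \otimes y)$, so $x \otimes y - ay \otimes x = 0$ in $M_* \otimes_{M_n(R)} R^{n \times 1}$.

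I expect the only real obstacle to be bookkeeping: pinning down the right action as $x \cdot A = A^* x$ and inserting exactly the twist $C$ (rather than $C^T$, $C^{-1}$, or $aC$) in the definition of $\phi$ so that all powers of $a$ cancel and $\theta$ emerges with $C$; once the identifications are set up correctly, every step is formal.
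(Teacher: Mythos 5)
Your argument is correct, and it follows the same Morita-theoretic outline as the paper, but the two proofs establish injectivity of $\theta$ by genuinely different mechanisms. The paper defines the pairing $(x,y)\mapsto x\cdot Cy$ directly on $R^{n\times 1}_*\times R^{n\times 1}$, checks it is $M_n(R)$-balanced (the same computation as your right-linearity check for $\phi$, just packaged differently), observes surjectivity via $e\otimes C^{-1}e\mapsto 1$, and then deduces injectivity indirectly: since $-\otimes_{M_n(R)}R^{n\times 1}$ is a Morita equivalence and $R^{n\times 1}_*$ is $n$-generated, the tensor product is a cyclic $R$-module, and a cyclic module surjecting onto $R$ must do so isomorphically. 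You instead exhibit an explicit right-module isomorphism $\phi\colon M_*\to R^{1\times n}$, $x\mapsto x^TC$, and realize $\theta$ as the composite of $\phi\otimes\mathrm{id}$ with the standard multiplication pairing $R^{1\times n}\otimes_{M_n(R)}R^{n\times 1}\cong R$; injectivity then comes for free from the standard case. Your route is more constructive and makes transparent exactly where the twist by $C$ enters, at the cost of one extra verification (right-linearity of $\phi$); the paper's route is shorter once one accepts the generation-count consequence of Morita equivalence. The closing computation showing $x\otimes y-ay\otimes x\in\ker\theta$ is the same in both, and your identities $(C^{-1})^T=aC^{-1}$ and $a^{-1}=a$ are used correctly throughout.
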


\begin{proof}

Define $P:R^{n \times 1}_* \times R^{n \times 1} \to R$ by $(x,y) \mapsto x \cdot Cy$. We check that $P$ is $M_n(R)$ bilinear, noting it is obviously biadditive. If $A \in M_n(R)$, we observe
\[P(xA,y)=P(C^{-1}A^TCx,y)=C^{-1}A^TCx \cdot Cy=x \cdot C^TA(C^T)^{-1}Cy\]
\[=x \cdot aCAaC^{-1}Cy=x \cdot CAy=P(x,Ay).\]
Thus we have an induced $R$-linear map $\theta:R^{n \times 1}_* \otimes_{M_n(R)} R^{n \times 1} \to R$ given by $\theta(x \otimes y)=x \cdot Cy$. Note that $\theta$ is surjective as $e \otimes C^{-1}e \mapsto 1$, where $e$ is the column vector with a $1$ as its first entry and zeroes elsewhere. But as is well-known (see for instance \cite[Theorem 17.20]{La99}), the functor $- \otimes_{M_n(R)} R^{n \times 1}$ is part of a Morita equivalence between $M_n(R)$ and $R$, whose inverse functor is given by $- \otimes_R R^{1 \times n}$. As $R^{n \times 1}_*$ is $n$-generated as an $R$-module, it must thus be that $R^{n \times 1}_* \otimes_{M_n(R)} R^{n \times 1}$ is a cyclic $R$-module, and it follows the surjection $\theta$ must be an isomorphism. We then observe 
\[\theta(x \otimes y-ay \otimes x)=x \cdot Cy-ay \cdot Cx=x \cdot Cy-Cx \cdot ay=x \cdot Cy-x \cdot aC^Ty=x \cdot Cy-x \cdot Cy=0.\]
It follows that $x \otimes y-ay \otimes x=0$ as $\theta$ is injective.
\end{proof}

For the remainder of this section, we will suppose $M$ is an $R$-module of constant rank $n$ such that $E:=\End_R(M)$ is an $R^*$-algebra. Let $Q$ be the total quotient ring of $R$. Localizing at the set of nonzerodivisors $S$ of $R$, $E_S \cong M_n(Q)$ inherits a $Q^*$-algebra structure from the $R^*$-algebra structure of $E$. When it is clear from context, we will abuse notation and write $(-)^*$ for either the $R^*$-operation of $E$ or the induced $Q^*$-operation of $E_S$. By Proposition \ref{*algstruc}, $(-)^*:M_n(R) \to M_n(R)^{op}$ is given by $A^*=C^{-1}A^TC$ for some invertible $C \in M_n(R)$ with $C^T=aC$ where $a$ is a unit of $Q$ with $a^2=1$. Write $a=\dfrac{r}{s}$ where $r,s$ are nonzerodivisors of $R$.

\begin{lem}\label{torsion}

Consider the submodule $T=\langle sx \otimes y-ry \otimes x \mid x,y \in M \rangle$ of $M_* \otimes_E M$. Then $T$ is a torsion $R$-module.

\end{lem}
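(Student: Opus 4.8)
The statement is local in nature: torsion can be checked after localizing at the set $S$ of nonzerodivisors of $R$, so the plan is to show $T_S = 0$ inside $(M_* \otimes_E M)_S$. The key point is that localization is exact and commutes with tensor products, so $(M_* \otimes_E M)_S \cong (M_S)_* \otimes_{E_S} M_S$, and under the identifications $E_S \cong M_n(Q)$, $M_S \cong Q^{n \times 1}$, this becomes exactly the module studied in Lemma \ref{localtorsion} (over the base ring $Q$ rather than $R$, with the induced $Q^*$-structure given by the same matrix $C$ and the unit $a \in Q$).

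**Main steps.** First I would record that $T$ is generated by the finitely many (well, the set of) elements $sx \otimes y - ry \otimes x$, and that since $R \to R_S = Q$ is a localization at nonzerodivisors, it suffices to prove each such generator maps to $0$ in $(M_* \otimes_E M)_S$; indeed an element of a finitely generated $R$-module is torsion iff it dies after inverting all nonzerodivisors, and a submodule generated by torsion elements is torsion. Second, I would set up the isomorphism $(M_* \otimes_E M)_S \cong (M_S)_* \otimes_{E_S} M_S$; here one must check that the right $E$-action on $M_*$ localizes to the right $E_S$-action on $(M_S)_*$, which is immediate from the formula $x \cdot f = f^* x$ together with the fact that $(-)^*$ localizes to the induced operation on $E_S$. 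Third, apply Lemma \ref{localtorsion} over $Q$: it gives that in $(M_S)_* \otimes_{E_S} M_S$ we have $\bar x \otimes \bar y - a \bar y \otimes \bar x = 0$ for all $\bar x, \bar y \in Q^{n\times 1}$. Finally, since $a = r/s$ with $s$ a unit in $Q$, multiplying by $s$ gives $s(\bar x \otimes \bar y) - r(\bar y \otimes \bar x) = 0$ in the localized module; taking $\bar x, \bar y$ to be the images of $x, y \in M$, this says exactly that the image of the generator $sx \otimes y - ry \otimes x$ vanishes in $(M_* \otimes_E M)_S$. Hence every generator of $T$ is torsion, so $T$ is a torsion $R$-module.

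**Expected obstacle.** The main technical care is in the second step: verifying that $(M_* \otimes_E M)_S$ is genuinely $(M_S)_* \otimes_{E_S} M_S$ with the \emph{correct} module structures, i.e.\ that forming the "twisted" right module $(-)_*$ commutes with localization. This hinges on the compatibility of $(-)^*$ with the localization map $E \to E_S$ — which is how $E_S$ was given its $Q^*$-structure in the first place — so it is not circular, but it should be stated carefully. Everything after that is a direct citation of Lemma \ref{localtorsion} plus the bookkeeping $a = r/s$, which is routine.
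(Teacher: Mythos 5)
Your proposal is correct and follows essentially the same route as the paper: localize at the nonzerodivisors, invoke Lemma \ref{localtorsion} over $Q$ to get $\bar{x}\otimes\bar{y}-a\bar{y}\otimes\bar{x}=0$, and clear the denominator $a=r/s$ to conclude each generator of $T$ dies in $T\otimes_R Q$. The extra care you take with the identification $(M_*\otimes_E M)_S\cong (M_S)_*\otimes_{E_S}M_S$ and the compatibility of $(-)^*$ with localization is left implicit in the paper (it is set up in the paragraph preceding the lemma), but your version is a faithful, slightly more detailed rendering of the same argument.
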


\begin{proof}

Localizing at the set $S$ of nonzerodivisors of $R$, Lemma \ref{localtorsion} implies that $\dfrac{x}{1} \otimes \dfrac{y}{1}-a\dfrac{y}{1} \otimes \dfrac{x}{1}=0$ in $(M_S)_* \otimes_{E_S} M_S$ for any $x,y \in M$. Then so is $\dfrac{sx}{1} \otimes \dfrac{y}{1}-\dfrac{ry}{1} \otimes \dfrac{x}{1}$. Thus $T \otimes_R Q=0$ so $T$ is torsion.
\end{proof}

\begin{lem}\label{genset}

Suppose $x,y$ are part of an minimal generating set for $M$ (resp $M_*$) as an $E$-module. Then $x,y$ are part of a minimal generating set for $M_*$ (resp $M$) as an $E$-module.

\end{lem}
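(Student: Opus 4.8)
The plan is to reduce the statement to the purely formal observation that the lattice of left $E$-submodules of $M$ and the lattice of right $E$-submodules of $M_*$ are literally \emph{the same} collection of subsets of the underlying abelian group, so that ``(minimal) generating set'' means the same thing on both sides.

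First I would record that the involution $(-)^*\colon E\to E$ is a bijection of the underlying set, being its own inverse by the axiom $a^{**}=a$. Since the right action defining $M_*$ is $x\cdot f=f^*x$, for any subset $S\subseteq M$ the right $E$-submodule of $M_*$ generated by $S$ is
\[
\Bigl\{\textstyle\sum_i s_i\cdot f_i \;\Big|\; s_i\in S,\ f_i\in E\Bigr\}=\Bigl\{\textstyle\sum_i f_i^{*}s_i \;\Big|\; s_i\in S,\ f_i\in E\Bigr\},
\]
and because $f\mapsto f^{*}$ is onto, the right-hand set equals $\bigl\{\sum_i g_i s_i \mid s_i\in S,\ g_i\in E\bigr\}$, which is exactly the left $E$-submodule of $M$ generated by $S$. (The only thing to check is the standard description of the span in a right module, which uses the right-module axioms and then the bijectivity of $*$; this is routine.)

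From this it follows immediately that $S$ generates $M$ as a left $E$-module if and only if $S$ generates $M_*$ as a right $E$-module, and hence that $S$ is a minimal generating set in the one sense precisely when it is in the other. In particular, if $x,y$ lie in a minimal generating set $G$ of $M$, the same $G$ is a minimal generating set of $M_*$ containing $x$ and $y$; the reverse implication is the same argument with the roles of $M$ and $M_*$ interchanged, using that $fx$ is the right action of $f^{*}$ and that $*$ is an involution. I do not anticipate any genuine obstacle here — no Nakayama-type or local-ring input is needed — the only point requiring a little care is the explicit description of the generated submodule in $M_*$, and even that hinges solely on $*$ being a bijection.
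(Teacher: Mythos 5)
Your proof is correct and is essentially the paper's own argument: both rest on the observation that $\sum_i x_i\cdot f_i=\sum_i f_i^* x_i$ together with the bijectivity of $*$, so a subset generates $M$ as a left $E$-module exactly when it generates $M_*$ as a right $E$-module. Your phrasing in terms of the generated submodules coinciding is just a slightly more explicit packaging of the same idea.
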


\begin{proof}

It suffices to show $x_1,\dots,x_n$ is a generating set for $M$ if and only if it is a generating set for $M_*$. But this claim follows as a given $g \in M$ can be expressed as $g=\sum_{i=1}^n f_ix_i$ if and only if it can be expressed as $g=\sum_{i=1}^n x_if^*_i$ in $M_*$.
\end{proof}

In light of Lemma \ref{genset}, we may uambiguously speak of ``a minimal $E$-generating set for $M$" without consideration of whether we mean the left or right $E$-module structure of $M$.

\begin{lem}\label{nonzero}

Suppose $E$ is local, e.g. $R$ is Henselian and $M$ is indecomposable. If $x,y$ are part of an $E$-minimal generating set for $M$, then $rx \otimes y-sy \otimes x \ne 0$ as an element of $M_* \otimes_E M$ for any $s \in R$ and any unit $r \in R$.

\end{lem}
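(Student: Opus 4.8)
The plan is to pass to the residue division ring of the local ring $E$ and verify non-vanishing there. Write $\mathfrak{n}$ for the unique maximal (left, equivalently right) ideal of $E$, so that $D := E/\mathfrak{n}$ is a division ring and the structure map $R \to E$ lands in the center, hence induces a map $R \to D$ with central image. The quotient maps $M_* \twoheadrightarrow \overline{M}_* := M_*/M_*\mathfrak{n}$ and $M \twoheadrightarrow \overline{M} := M/\mathfrak{n}M$ are homomorphisms of right and left $E$-modules respectively, where $E$ acts on the targets through $E \to D$, and so they induce a surjection of $R$-modules
\[ M_* \otimes_E M \longrightarrow \overline{M}_* \otimes_E \overline{M} = \overline{M}_* \otimes_D \overline{M}, \]
the equality holding because $\mathfrak{n}$ annihilates both factors. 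Thus it suffices to prove that the image of $rx \otimes y - sy \otimes x$ in $\overline{M}_* \otimes_D \overline{M}$ is nonzero.

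Next I would set up coordinates. Complete $\{x,y\}$ to a minimal $E$-generating set $x = x_1,\, y = x_2,\, x_3, \dots, x_m$ of $M$; by Lemma \ref{genset} this is also a minimal $E$-generating set of $M_*$, and by Nakayama's lemma over the local ring $E$ the images $\overline{x}_1, \dots, \overline{x}_m$ form simultaneously a basis of $\overline{M}$ as a left $D$-vector space and of $\overline{M}_*$ as a right $D$-vector space. Using the left-basis decomposition $\overline{M} = \bigoplus_{j=1}^m D\overline{x}_j$ one gets $\overline{M}_* \otimes_D \overline{M} = \bigoplus_{j=1}^m \bigl(\overline{M}_* \otimes_D D\overline{x}_j\bigr)$, and each $\overline{M}_* \otimes_D D\overline{x}_j \to \overline{M}_*$, $v \otimes d\overline{x}_j \mapsto vd$, is an isomorphism; let $\pi_2 \colon \overline{M}_* \otimes_D \overline{M} \to \overline{M}_*$ be the resulting projection onto the $j=2$ summand, so $\pi_2(v \otimes \overline{x}_2) = v$ and $\pi_2(v \otimes \overline{x}_1) = 0$ for all $v \in \overline{M}_*$.

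Finally, compute. Since $r$ is a unit of $R$, its image $\overline{r}$ in $D$ is a nonzero central element, so the image of $rx \otimes y - sy \otimes x$ in $\overline{M}_* \otimes_D \overline{M}$ equals $\overline{r}(\overline{x}_1 \otimes \overline{x}_2) - \overline{s}(\overline{x}_2 \otimes \overline{x}_1)$, and applying $\pi_2$ yields $\overline{r}\,\overline{x}_1$. As $\overline{x}_1$ is a member of a basis of the free right $D$-module $\overline{M}_*$ and $\overline{r} \neq 0$, we have $\overline{r}\,\overline{x}_1 \neq 0$; hence $rx \otimes y - sy \otimes x$ is already nonzero in $\overline{M}_* \otimes_D \overline{M}$, and therefore in $M_* \otimes_E M$.

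The load-bearing points — and where I expect the (mild) difficulty to lie — are the ring-theoretic generalities for the noncommutative local ring $E$: that $\mathfrak{n} = \operatorname{Jac}(E)$ is two-sided with division quotient, that Nakayama's lemma applies to finitely generated left and right $E$-modules so that ``part of a minimal generating set'' translates to ``part of a $D$-basis after reduction'' on both sides (this is exactly where $E$ local, and hence indecomposability of $M$ over a Henselian $R$, is used), and the base-change identity $\overline{M}_* \otimes_E \overline{M} = \overline{M}_* \otimes_D \overline{M}$. Once these are granted, the left/right bookkeeping in the displayed decomposition and the one-line application of $\pi_2$ complete the argument.
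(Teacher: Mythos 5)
Your proof is correct and follows essentially the same route as the paper's: both pass to the residue division ring $D = E/J(E)$, use the Nakayama--Azumaya--Krull lemma to turn the minimal $E$-generating set (via Lemma~\ref{genset}) into simultaneous left and right $D$-bases of the reductions, and then detect the $(1,2)$-coefficient of $rx\otimes y - sy\otimes x$ to conclude it is $\bar r \neq 0$. The only cosmetic difference is that the paper packages the coordinate-extraction as the map $\theta_{12}=\beta\circ((\delta_1^*\circ p_*)\otimes_D(\delta_2\circ p))$ landing in $D$, whereas you project $\overline{M}_*\otimes_D\overline{M}$ onto the summand indexed by $\overline{x}_2$ and observe the nonzero vector $\bar r\,\overline{x}_1$ there; these are the same computation.
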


\begin{proof}

Let $J(E)$ be the Jacobson radical of $E$, which is maximal as $E$ is local. Set $D=E/J(E)$ which is thus a division ring. Set $V=M/J(E)M$ and $V_*=M_*/M_*J(E)$, and let $p:M \to V$ and $p_*:M_* \to V_*$ be the natural projection maps. Let $x_1,x_2,\dots,x_n$ be a minimal $E$-generating set for $M$ with $x_1=x$ and $x_2=y$. By Nakayama-Azumaya-Krull, the images $\bar{x}_1,\bar{x}_2,\dots,\bar{x}_n$ form a left basis for $V$ and a right basis for $V_*$ over $D$. We thus have the natural Kronecker delta maps $\delta_i:V \to D$ and $\delta^*_i:V_* \to D$ for each $i$, as well as the product map $\beta:D \otimes_D D \to D$ given by $a \otimes b \mapsto ab$. If $\theta_{ij}=\beta \circ ((\delta_i^* \circ p_*) \otimes_D (\delta_j \circ p))$, then $\theta_{12}(r(x \otimes y)-s(y \otimes x))=\bar{r}$. But $\bar{r} \ne 0$, since $r$ is a unit of $R$, and multiplication by $r$ thus corresponds to a unit of $E$. Thus $rx \otimes y-sy \otimes x \ne 0$. 
\end{proof}

We are now ready to present the main theorem of this section.

\begin{theorem}\label{mainthm2}
Suppose $E$ is local, e.g. $R$ is Henselian and $M$ is an indecomposable, that $E$ possesses an $R^*$-algebra structure, and that $M$ is a torsion-free $R$-module with rank. Further suppose one of the following holds:
\begin{enumerate}

\item[$(1)$] $R$ is a domain.

\item[$(2)$] $R$ is reduced of characteristic $2$.

\end{enumerate}

If the $R$-module $M_* \otimes_E M$ is torsion-free, then $M$ is a cyclic $E$-module.

\end{theorem}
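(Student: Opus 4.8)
The plan is to argue by contradiction: assuming $M$ is not cyclic over $E$, I will produce a nonzero torsion submodule of $M_*\otimes_E M$, contradicting the hypothesis. Two earlier results do essentially all of the work. Lemma \ref{torsion} exhibits a torsion submodule $T=\langle sx\otimes y-ry\otimes x\mid x,y\in M\rangle$ of $M_*\otimes_E M$, where $a=r/s$ and $a^2=1$ is the involution scalar supplied by Proposition \ref{*algstruc}; and Lemma \ref{nonzero} guarantees that, whenever $x,y$ are part of a minimal $E$-generating set, an element of the form $r'x\otimes y-s'y\otimes x$ is nonzero provided $r'$ is a \emph{unit} of $R$. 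So the game is to arrange that a generator of $T$ has this shape, and the only obstacle is that the coefficient $s$ sitting in front of $x\otimes y$ in a generator of $T$ need not be a unit of $R$; this is exactly the point at which hypotheses (1) and (2) are needed.

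First I would pin down $a$. Recall that $a$ is a unit of the total quotient ring $Q$ with $a^2=1$. Under (1), $Q=\Frac(R)$ is a field, so $(a-1)(a+1)=0$ forces $a=\pm 1$. Under (2), $Q$ is reduced (being a localization of $R$) and $2=0$ in $Q$, so $(a-1)^2=a^2-2a+1=a^2+1=0$, whence $a=1$. In either case $a$ lies in $R$, and then $a^2=1$ shows $a$ is a unit (hence a nonzerodivisor) of $R$; I may therefore take $r=a$ and $s=1$ in the normalization $a=r/s$ preceding Lemma \ref{torsion}. With this choice Lemma \ref{torsion} says that $T=\langle x\otimes y-ay\otimes x\mid x,y\in M\rangle$ is a torsion submodule of $M_*\otimes_E M$.

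Now suppose toward a contradiction that $M$ is not cyclic as an $E$-module, so $\mu_E(M)\ge 2$. By Lemma \ref{genset} this condition is unambiguous regardless of which $E$-action one uses, and I may choose $x,y$ belonging to a minimal $E$-generating set of $M$. Writing $x\otimes y-ay\otimes x=1\cdot x\otimes y-a\cdot y\otimes x$ and applying Lemma \ref{nonzero} with the unit $1\in R$ in the role of $r'$ and with $a\in R$ in the role of $s'$, I conclude $x\otimes y-ay\otimes x\neq 0$ in $M_*\otimes_E M$. Since this element lies in $T$, we get $T\neq 0$; but $T$ is torsion, contradicting the hypothesis that $M_*\otimes_E M$ is torsion-free. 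Therefore $\mu_E(M)\le 1$, i.e.\ $M$ is a cyclic $E$-module.

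I expect the only genuine content beyond the cited lemmas to be the first step: the dichotomy (1)/(2) is used solely to force the involution scalar $a$ to lie in $R$ and be a unit there, which is precisely what allows the torsion generator furnished by Lemma \ref{torsion} to be matched against the nonvanishing criterion of Lemma \ref{nonzero}. Everything after that is formal.
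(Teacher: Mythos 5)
Your proof is correct and follows essentially the same route as the paper's: both use hypotheses (1) and (2) only to force $a=\pm 1$, then combine Lemma \ref{torsion} (the submodule $T$ is torsion) with Lemma \ref{nonzero} (a generator of $T$ is nonzero when $M$ is not $E$-cyclic) to conclude. Your additional remark that $a$ being a unit of $R$ is exactly what lets the generators of $T$ match the nonvanishing criterion is a fair and accurate gloss on why the case analysis is needed.
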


\begin{proof}

If $R$ is a domain, then $Q$ is a field, and as $a^2=1$, we must have $a=1$ or $a=-1$. If instead, $R$ is reduced of characteristic $2$, then so is $Q$. As $(a-1)^2=a^2-1=0$ in $Q$, and as $Q$ is reduced, it must be that $a=1$. Thus in either of the two cases, we have $a=1$ or $a=-1$.

Now, consider the $R$-module $T$ of $M_* \otimes_E M$ given by $T=\langle x \otimes y-y \otimes x \mid x,y \in M \rangle$ if $a=1$ or $T=\langle x \otimes y+y \otimes x \mid x,y \in M \rangle$ if $a=-1$. Then Lemma \ref{torsion} gives that that $T$ is torsion, and as $M_* \otimes_E M$ is supposed to be torsion-free, we must have that $T=0$. But by Lemma \ref{nonzero}, this forces $M$ to be a cyclic $E$-module.
\end{proof}

Finally, we return to the construction of Huneke-Wiegand used to prove Proposition \ref{hwex} and use it to provide the following example. Our example shows that while $M$ and $M \otimes_R M$ are torsion-free in the exmaples of Huneke-Wiegand, $M_* \otimes_{\End_R(M)} M$ may still have torsion, even if $M$ and $M_*$ are cyclic $E$-modules.

\begin{example}\label{selfdual}
Consider the numerical semigroup ring $k[\![t^3,t^4,t^5]\!]$. It can be seen from \cite{He70} that this ring has the presentation $R:=k[\![x,y,z]\!]/(y^2-xz,x^2y-z^2,x^3-yz)$, and the canonical ideal $I:=(x,y)$ of $R$ has presentation matrix $\begin{pmatrix} y & z & x^2 \\ -x & -y & -z \end{pmatrix}$. Let $(-)^{\vee}:=\Hom_R(-,I)$. Since $I$ is a two-generated ideal containing the nonzerodivisor $x$, there is an isomorphism $(x:I) \to \Omega^1_R(I)$ given by $a \mapsto \begin{pmatrix} \dfrac{ay}{x} \\ -a \end{pmatrix}$ (see e.g. \cite[Proof of Lemma 3.3]{HH05}). Compose this map with the natural inclusion $\Omega^1_R(I) \to R^{\oplus 2}$ to get an injection $j:(x:I) \to R^{\oplus 2}$. We verify directly that $yI \subseteq (x)$ and $zI \subseteq (x)$, so $(x:I)=\m:=(x,y,z)$. We also note that $(x:\m)=\m$. We thus have a short exact sequence
\[0 \rightarrow \m \xrightarrow{j} R^{\oplus 2} \xrightarrow{p} I \rightarrow 0\]
where $p$ maps the standard basis elements $e_1 \mapsto x$ and $e_2 \mapsto y$. There is also a commutative diagram:
\[\begin{tikzcd}
	0 & I^{\dagger} & (R^{\oplus 2})^{\dagger} & {\m^{\dagger}} & {\Ext^1_R(I,R)} & 0 \\
	0 & \m & {R^{\oplus 2}} & {(x:\m)} & {(x:\m)/I} & 0
	\arrow[from=1-1, to=1-2]
	\arrow["p^{\dagger}", from=1-2, to=1-3]
	\arrow["j^{\dagger}", from=1-3, to=1-4]
	\arrow["\theta", from=1-4, to=1-5]
	\arrow[from=1-5, to=1-6]
	\arrow["j", from=2-2, to=2-3]
	\arrow["{\epsilon \circ p}", from=2-3, to=2-4]
	\arrow[from=2-4, to=2-5]
	\arrow[from=2-5, to=2-6]
	\arrow[from=2-1, to=2-2]
	\arrow["\alpha"', from=1-2, to=2-2]
	\arrow["\beta"', from=1-3, to=2-3]
	\arrow["\gamma"', from=1-4, to=2-4]
	\arrow[dashed, from=1-5, to=2-5]
\end{tikzcd}\]
where $\alpha:I^{\dagger} \to (x:I)=\m$ is the isomorphism given by $f \mapsto f(x)$, $\gamma:m^{\dagger} \to (x:\m)=\m$ is the isomorphism given by $f \mapsto f(-x)$, where $\beta:(R^{\oplus 2})^{\dagger} \to R^{\oplus 2}$ is the isomorphism given by $f \mapsto \begin{pmatrix} f(e_2) \\ -f(e_1) \end{pmatrix}$, and where $\epsilon:I \to (x:\m)=\m$ is the natural inclusion. As $(x:\m)=\m$, we see $(x:\m)/I \cong z/z \cap I$. As $z\m \subseteq (x) \subseteq I$, it follows that $(x:\m)/I$ is generated by the image of $z$ and that $(x:\m)/I \cong k$. In particular, the map $i:\m \to R$ given as multiplication by $\dfrac{z}{x}$ maps under $\theta$ to a nonzero element of $\Ext^1_R(I,R)$. But by definition, $\theta(i)$ is the extension of $I$ by $R$ given as the pushforward along the maps $j$ and $i$. So there is a pushforward diagram
\[\begin{tikzcd}
	0 & \m & {R^{\oplus 2}} & I & 0 \\
	0 & R & M & I & 0
	\arrow[from=1-1, to=1-2]
	\arrow["j", from=1-2, to=1-3]
	\arrow["p", from=1-3, to=1-4]
	\arrow[from=1-4, to=1-5]
	\arrow[from=2-1, to=2-2]
	\arrow[from=2-4, to=2-5]
	\arrow["i"', from=1-2, to=2-2]
	\arrow[from=1-3, to=2-3]
	\arrow[equal, from=1-4, to=2-4]
	\arrow["s", from=2-2, to=2-3]
	\arrow["t", from=2-3, to=2-4]
\end{tikzcd}\]
with exact rows, with $M=\coker(A)$ for $A=\begin{pmatrix} y & z & x^2 \\ -x & -y & -z \\ z & x^2 & xy \end{pmatrix}$, where $s:R \to M=R^{\oplus 3}/\im A$ is given by $s(1)=\bar{e}_3$, and where $t:M \to I$ is given by $t(\bar{e}_1)=x$, $t(\bar{e}_2)=y$ and $t(\bar{e}_3)=0$. In particular, the bottom row does not split, and we note that the depth lemma applied to the bottom row forces $M$ to be torsion-free. We also note that $\mu_R(M)=3$, and by additivity of rank, that $\rank(M)=2$. Since $I$ is a canonical ideal, applying $\Hom_R(-,I)$ to the bottom row, we obtain an extension of the form $0 \to R \to M^{\vee} \to I \to 0$. In particular, $\mu_R(M^{\vee}) \le 3$.

We will produce a concrete generating set for $M^{\vee}$ and a concrete isomorphism $M \to M^{\vee}$. Define maps $g_1,g_2,g_3:R^{\oplus 3} \to I$ by $g_1(e_1)=-y$, $g_1(e_2)=0$, $g_1(e_3)=x$, $g_2(e_1)=0$, $g_2(e_2)=x^2$, $g_2(e_3)=y$, $g_3(e_1)=x$, $g_3(e_2)=y$, and $g_3(e_3)=0$, extending by linearity. We may directly check the generators of $\im(A)$ are contained in the kernels of each of $g_1$, $g_2$, and $g_3$. Thus, there are corresponding induced maps $f_1,f_2,f_3 \in M^{\vee}$. It evident that no $f_i$ is in the $R$-span of the other two, and as $\mu_R(M^{\vee}) \le 3$, it follows $\{f_1,f_2,f_3\}$ is a minimal generating set for $M^{\vee}$.

We now define $\zeta:R^{\oplus 3} \to M^{\vee}$ by $\zeta(e_i)=f_i$ for $i=1,2,3$. We check that the maps $yf_1-xf_2+zf_3$, $zf_1-yf_2+x^2f_3$, $x^2f_1-zf_2+xyf_3$ each map all the generators of $M$ to $0$, and so are the zero map. In particular, $\im(A) \subseteq \ker(\zeta)$, so $\zeta$ induces a map $\phi:M \to M^{\vee}$. As $\{f_1,f_2,f_3\}$ is a minimal generating set for $M^{\vee}$, $\phi$ is obviously surjective. Then as $\phi$ is a surjection between two modules of the same rank, $\ker(\phi)$ is torsion, but $M$ is torsion-free, so $\ker(\phi)=0$ and $\phi$ is injective. 

We observe that the generating sets $\{\bar{e}_1,\bar{e}_2,\bar{e}_3\}$ for $M$ and $\{f_1,f_2,f_3\}$ for $M^{\vee}$ have the property that $f_i(\bar{e}_j)=f_j(\bar{e}_i)$ for all $i,j$ and it follows $M$ is strongly self dual with respect to $I$ through the isomorphism $f$. Proposition \ref{stronglyself} then gives that $\End_R(M)$ has an $R^*$-algebra structure with $M_* \cong M^{\vee}$. 

We finally claim that $M$ is indecomposable.  Indeed, if $M$ is decomposable, then as $M$ is a torsion-free module with $\mu_R(M)=3$ and $\rank(M)=2$, it must decompose into a sum of two ideals, one of which must be principal. Then $M$ has a copy of $R$ as a summand. As $M \cong M^{\vee}$, it would follow that its other summand must be isomorphic to $I$, so we would have $M \cong R \oplus I$. But then Miyata's theorem would force the extension $0 \rightarrow R \xrightarrow{s} M \xrightarrow{t} I \rightarrow 0$ to split, which does not occur. Thus $M$ is indecomposable, and so $E$ is local.

As $M$ fits the construction of the proof of \cite[Proposition 4.7]{HW94}), we have that $M \otimes_R M$ and $M \otimes_R I$ are torsion-free. Then \cite[Lemma 5.3]{DE21} implies that $\Ext^1(I,M^{\vee}) \cong \Ext^1_R(I,M)=0$ and that $\Ext^1_R(M,R)=0$. Applying $\Hom(M,-)$ to the exact sequence 
\[0 \rightarrow R \xrightarrow{s} M \xrightarrow{t} I \rightarrow 0\]
we thus get an exact sequence of right $E$-modules
\[0 \rightarrow \Hom_R(M,R) \xrightarrow{\Hom(M,s)} \Hom_R(M,M) \xrightarrow{\Hom(M,t)} M^{\vee} \rightarrow 0,\]
while applying $\Hom_R(-,M)$ instead gives an exact sequence of left $E$-modules
\[0 \rightarrow \Hom_R(I,M) \xrightarrow{\Hom(t,M)} \Hom_R(M,M) \xrightarrow{\Hom(s,M)} \Hom_R(R,M) \rightarrow 0.\]
In particular, it follows that $\Hom_R(R,M) \cong M$ is a cyclic left $R$-module generated by $s$ while $M^{\vee}$ is a cyclic right $E$-module generated by $t$. In particular, by Nakayama-Azumaya-Krull, $t \otimes s$ is a nonzero element of $M^{\vee} \otimes_E \Hom_R(R,M) \cong M_* \otimes_E M$. But there is a trace map $\phi:M^{\vee} \otimes_E \Hom_R(R,M) \to \Hom_R(R,I)$ given by $f \otimes g \mapsto f \circ g$, and whose kernel is torsion by rank considerations. But then $\phi(t \otimes s)=t \circ s=0$, so $t \otimes s$ is a nonzero torsion element in $M^{\vee} \otimes_E \Hom_R(R,M)$, and in particular $M_* \otimes_E M$ has torsion.

Since $R$ has minimal multiplicity, this example shows as a byproduct that the ideal condition cannot be relaxed to allow modules of higher rank in \cite[Question 4.1]{CG19}, and the condition that $\Hom_R(M,\w) \otimes_R M \cong \w$ in \cite[Question 4.2]{CG19} cannot be relaxed to only assume $\Hom_R(M,\w) \otimes_R M$ is torsion-free or even maximal Cohen-Macaulay.

\end{example}

\bibliographystyle{plain}
\bibliography{mybib}

\vspace{.3cm}

\end{document}